\documentclass[leqno]{amsart}

\usepackage{amscd,amssymb,amsmath}

\usepackage[bookmarks]{hyperref}
\hypersetup{backref, colorlinks=true}
\usepackage{tikz}

\setlength{\textwidth}{150mm} \setlength{\textheight}{210mm}
\setlength{\oddsidemargin}{0.5in} \setlength{\topmargin}{.4in}\usepackage{graphicx}
\usepackage[all]{xy}
\setlength{\textwidth}{150mm} \setlength{\textheight}{210mm}

\setlength{\oddsidemargin}{0.5in} \setlength{\topmargin}{.4in}

%*****************************************************************
\newtheorem{theorem}{Theorem}[section]
\newtheorem{lemma}[theorem]{Lemma}

\newtheorem{corollary}[theorem]{Corollary}
\newtheorem{remark}[theorem]{Remark}

\newtheorem{example}[theorem]{Example}
\newtheorem{definition}[theorem]{Definition}

%************************************

%*******************************************

\newenvironment{equationth}{\stepcounter{theorem}\begin{equation}}{\end{equation}}

%\thanks{Research partially supported by the NCN grant 2011/01/B/ST1/03875.}

\def\rond{\mathaccent"7017}

\begin{document}
\keywords{Intersection homology, Pinchuk's map,  Jacobian conjecture}
\subjclass[2010]{55N33; 32S05; 14J**}

\title[]{A singular variety associated to the smallest degree Pinchuk map} 
\makeatother

\author[Nguyen Thi Bich Thuy]{Nguyen Thi Bich Thuy}
\address[Nguyen Thi Bich Thuy]{UNESP, Universidade Estadual Paulista, ``J\'ulio de Mesquita Filho'', S\~ao Jos\'e do Rio Preto, Brasil}
\email{bichthuy@ibilce.unesp.br}
\maketitle \thispagestyle{empty}

\begin{abstract}
n \cite{Valette}, the authors associated to a nonvanishing Jacobian polynomial map $F : \mathbb{C}^2 \to \mathbb{C}^2$ singular varieties whose intersection homology  describes the geometry of singularities of the map. 
We describe such a variety associated to the smallest degree Pinchuk map and we calculate its intersection homology.  \end{abstract}

\section{Introduction} 
Let $F: \mathbb{K}^n\to\mathbb{K}^n$, where $\mathbb{K} = \mathbb{C}$ or $\mathbb{K} = \mathbb{R}$, be a polynomial map and let us denote by $JF(x)$ the Jacobian matrix of $F$ at $x \in \mathbb{K}^n$.  The determinant $\det (JF(x))$ is a polynomial map from $\mathbb{K}^n$ to $\mathbb{K}$. 
In 1939, O. H. Keller \cite{kel} stated a famous conjecture known nowadays as the  Jacobian conjecture, whose statement is the following:

``A polynomial map $F: \mathbb{K}^n\to\mathbb{K}^n$ is nowhere vanishing Jacobian, {\it i.e.} ${\rm det} (JF(x)) \neq 0$, for any $x \in \mathbb{K}^n$, if and only if it is a polynomial automorphism.'' 

If $F$ has a global inverse, then its inverse, being continous, maps compact sets into compact sets, in other words $F$ is proper. 
The smallest set $S_F$ such that the map $F:\mathbb{K}^n \setminus F^{-1}(S_F)\to \mathbb{K}^n\setminus S_F $ is proper 
is called the asymptotic set of $F$. 
The Jacobian conjecture  reduces to show that the asymptotic set of  a polynomial  map satisfying the  nonvanishing Jacobian condition is empty.

In the complex case, the Jacobian conjecture remains open today even for the dimension 2. However, in the real case, the 2-dimensional Jacobian conjecture was solved negatively  by Pinchuk \cite{Pinchuk}  in the year 1994. 
 In fact, Pinchuk provided counter-examples by giving a class of polynomial maps 
 $(p,q) : \mathbb{R}^2 \to \mathbb{R}^2$  satisfying  the condition ${\rm det} (J(p,q)(x,y)) >0$ for every 
 $(x,y) \in \mathbb{R}^2$ but $F$ is not injective.  Let us recall his construction: given $(x,y)\in \mathbb{R}^2$, denote
$$t= xy -1, \quad h=t(xt+1), \quad f=(xt+1)^2(t^2+y).$$
Then the Pinchuk maps $(p, q)$ are the ones with $p= f+h$ and $q$ varies 
 for different maps $(p,q)$ but $q$ always has the form 
$$q=-t^2 -6th(h+1) -u(f,h)$$
where $u$ is an auxiliary polynomial in $f$ and $h$ and is chosen such that 
$${\rm det} (J(p,q)) = t^2 + (t+ (f(13+15h))^2 +f^2$$
(\cite[Lemma 2.2]{Pinchuk}). 
Then ${\rm det} (J(p,q)(x,y)) >0$, for every $(x,y) \in \mathbb{R}^2$ since if $t = 0$ then $f= y = \frac{1}{x} \neq 0$.  

As the Remark at the end of the paper  \cite{Pinchuk}, 
the Pinchuk map constructed in the proof of  \cite[Lemma 2.2]{Pinchuk} has degree 40, 
where ${\rm deg}(p) = 10$ and  ${\rm deg}(q) = 40$, 
but one can reduce ${\rm deg} (q)$ to 35 by a suitable choice of the 
auxiliary polynomial $u(f,h)$. 
In \cite{Essen} (page 241), Arno van den Essen  choose
$$u(f, h) = 170fh + 91h^2 + 195fh^2 + 69h^3 + 75fh^3 + \dfrac{75}{4}h^4,$$
and in this case the degree of the Pinchuk map is 25. 
This is also the one that Andrew Campbell studies in the series of his papers 
 \cite{Andrew1, Andrew2, Andrew3, Andrew4, Andrew5}. 
 Let us denote by $\mathcal{P}$ this Pinchuk map. 
 
  This paper is inspired by the paper \cite{Valette}: in the year 2010, Anna Valette and Guillaume Valette gave  a new approach to study the complex Jacobian conjecture. 
Given a polynomial map $F: \mathbb{C}^n \to \mathbb{C}^n$, 
 they constructed some real $2n$-dimensional pseudomanifolds $V_F$ contained in some Euclidean space $\mathbb{R}^{\nu}$, where $\nu > 2n$ and such that 
the singular loci of these pseudomanifolds are contained in $(S_F \times K_0(F)) \times \{ 0_{\mathbb{R}^{\nu -2n}}\}$,  where $K_0(F)$ is the set of critical values of $F$. 
 In the case of dimension 2, they prove that for a non\-va\-nishing Jacobian polynomial map $F: \mathbb{C}^2 \to \mathbb{C}^2$ ({\it i.e}, $K_0(F) = \emptyset$), the condition ``$S_F = \emptyset$'' is equivalent to the condition ``the intersection homology in dimension two and with any perversity of any constructed pseudomanifold $V_F$ is trivial'' (Theorem 3.2 in \cite{Valette}). This result is generalized 
 in the case of higher dimension in \cite{ThuyValette}. 
 Moreover, the varieties defined by Anna and Guillaume Valette can also be modified to study the bifurcation set of a polynomial map $G: \mathbb{C}^n \to \mathbb{C}^{n-1}$ (\cite{ThuyCidinha}).

 We call {\it Valette varieties} for singular varieties  $V_F$ constructed by Anna and Guillaume Valette in  \cite{Valette}. Let us remark that the Valette varieties can be defined also for real polynomial maps $F: \mathbb{R}^n \to \mathbb{R}^n$ (see Remark 2.7 of \cite{Valette} or Proposition 3.8 of \cite{ThuyValette}). In this case, the Valette varieties are not necessarily pseudomanifolds, they are just semi-algebraic stratified sets. In this paper, we investigate the following two natural questions:

1) How are the behaviours of Valette varieties associated to  the Pinchuk map $\mathcal{P}$ of degree 25 mentioned above? 

2) Is there a ``real version'' of Anna and Guillaume Valette's result, {\it i.e} if $F: \mathbb{R}^2 \to \mathbb{R}^2$ is a nowhere vanishing Jacobian polynomial map then the condition $S_F = \emptyset$ is equivalent to the condition $IH_1^{\bar{0}}(V_F) = 0$? (Notice that in this case, the dimension of $V_F$ is 2, then there is only one perversity: the zero perversity).

Remark that since the Pinchuk map $\mathcal P$ satisfies the novanishing Jacobian condition then the asymptotic variety $S_{\mathcal P}$ is non-empty. We describe in this paper a Valette variety $V_{\mathcal{P}}$ associated to the Pinchuk map $\mathcal{P}$ and we calculate its intersection homology. 
The main result is Theorem \ref{Inter_Hom_Pinchuk}: the intersection homology of $V_{\mathcal{P}}$ in dimension one and with the zero perversity is trivial. 
  The main tool to prove this result is the 
  description of the behaviours of the asymptotic variety and the ``asymptotic flower'' (the inverse image of the asymptotic variety) of the Pinchuk map $\mathcal{P}$ in the papers \cite{Andrew1, Andrew2, Andrew3}. 
  
  The structrue of the paper is the following: 
   Section \ref{Intersection homology} is the preliminary about intersection homology. 
    Section \ref{ValetteVariety} gives some preliminaries about the asymptotic variety and Valette varieties of a polynomial map 
    $F: \mathbb{K}^n \to \mathbb{K}^n$ for both cases $\mathbb{K} = \mathbb{R}$ and $\mathbb{K} = \mathbb{C}$.
    We provide in this section two examples: 
    Example  \ref{example_asymp_set} is to illustrate the difference of the asymptotic set for the same map but in complex and real situations; 
    Example  \ref{example_ValetteVariety} is for illustrating the construction of Valette varieties. 
    We end the paper with Section  \ref{MainResult} where the main result 
    (Theorem \ref{Inter_Hom_Pinchuk}) is proved. 
    This result shows that there is a counter-example for a ``real version'' of the Valette's result.

\section{Intersection homology} \label{Intersection homology}
Given a variety $V$ in $\mathbb{R}^k$, we denote by ${\rm Reg} (V)$ and ${\rm Sing}(V)$ the regular and singular loci of the variety $V$, respectively. 
 Moreover, $\overline{V}$ will stand for the topological closure of $V$. The boundary of $V$ will be denoted by $\partial V$.

We briefly recall the definition of intersection homology. For details,
the readers can see 
\cite{GM1, GM2} or \cite{Jean Paul}.

\begin{definition} 
{\rm Let $V$ be an $m$-dimensional variety in $\mathbb{R}^k$.  A {\it locally topologically trivial stratification of $V$} is the data of a finite  filtration 
\begin{equationth} \label{dfn_stratification}
V = V_{m} \supset V_{m-1} \supset \cdots \supset V_0 \supset V_{-1} = \emptyset
\end{equationth}
 such that:   

1) for every $i$,  the set $S_i = V_i\setminus V_{i-1}$ is either empty or a topological manifold of dimension $i$; 

2) for every $x \in V_i\setminus V_{i-1}$, for all $i \ge 0$, there is an open neighborhood $U_x$  of $x$ in $V$, a stratified set $L_i$ and a homeomorphism 
 $$h:U_x \to (0,1)^i \times  cL_i,$$   such  that
 $h$ maps the strata of $U_x$ 
(induced stratification) onto the strata of  $  (0,1)^i \times cL_i$  (product stratification).

A connected component of $S_i$ is  called   {\it a stratum} of $V$.
}
\end{definition}

\begin{definition}
{\rm 
A {\it perversity} is an 
$(m+1)$-uple  of integers 
$$\bar p = (p_0, p_1, p_2,
p_3,\dots , p_m)$$ 
such that $p_0 = p_1 = p_2 = 0$ and $p_{r+1}\in\{p_r, p_r + 1\}$ for $ 2 \leq r \leq m-1$.

The perversity 
$\overline{0}=(0,\dots,0)$ is called the zero perversity.

In this paper, we consider  the groups of $i$-dimensional $PL$ chains $C_i(V)$ and we denote by  $c$  the support of a chain $c$.   
  A chain $c$ is $(\bar
p, i)$-{\it allowable} if  
$\dim (c \cap V_{m-r}) \leq i - r + p_r$, for all $r \geq 0$. 
It is easy to see that this condition holds always when $r =0$. 
Define $IC_i ^{\overline{p}}(V)$ to be the $\mathbb{R}$-vector subspace of $C_i(V)$
consisting of the chains $c$ such that $c$ is
$(\overline{p}, i)$-allowable and its boundary $\partial c$ is
$(\overline{p}, i - 1)$-allowable, that means 
\begin{equationth} \label{conditionallowable}
IC^{\overline{p}}_i (V) = \left\{c \in C_i(V) :  
\begin{matrix}
 & \dim ( c \cap V_{m- r}) \leq i- r + p_{r}  & \cr 
 & \dim ( (\partial c) \cap V_{m- r} \leq (i - 1) - r + p_r&  
\end{matrix}, \forall r \geq 1 \right\}.
\end{equationth}
}
\end{definition}

\begin{definition} 
{\rm The {\it $i^{th}$ intersection homology group with perversity $\overline{p}$}, denoted by
$IH_i ^{\overline{p}}(V)$, is the $i^{th}$ homology group of the
chain complex $IC^{\overline{p}}_*(V).$
}
\end{definition}
 
Recall that a {\it pseudomanifold} $V$ is a variety such that its singular locus 
is of codimension at least 2 in $V$ and its regular locus is dense in $V$. 
Goresky and MacPherson \cite{GM1, GM2} proved that the intersection homology of a pseudomanifold does not depend on a choice of a locally topologically trivial stratification (see also \cite{Jean Paul}).

In this paper, we consider the intersection homology with real coefficients, {\it i.e.} the intersection homology groups $IH_i^{\bar{p}}(V, \mathbb{R})$.  
Moreover,  we consider the groups of $PL$ chains with both compact supports and  closed supports.  
 Given a triangulation of $V$, 
recall that a chain with compact support is a chain of the form $\sum c_\sigma \sigma$  
for which all coefficients $c_\sigma \in \mathbb{Z}$ 
are zero but a finite number, where $\sigma$ are $i$-simplices. 
 A chain with closed support is a locally finite linear combination 
$\sum c_\sigma \sigma$ 
with integer coefficients $c_\sigma \in \mathbb{Z}$. 
Notice that if $c$ is a chain with compact support, then $c$ is also a chain with closed support.

The homology groups with closed supports are called Borel Moore homology, or homology groups of {\it  deuxi\`eme esp\`ece} in \cite {Cartan}. 
The intersection homology groups with closed supports are  denoted by 
$IH_i^{\bar{p},{\rm cl}} (V)$. 
  The corresponding intersection homology groups  with compact supports are denoted by 
  $IH_i^{\bar{p},\rm c} (V)$.

\section{Asymptotic variety and Valette varieties}  \label{ValetteVariety}

\subsection{Asymptotic variety}

Let $F : \mathbb{K}^n \to \mathbb{K}^n$ be a polynomial map, where $\mathbb{K}= \mathbb{C}$ or $\mathbb{K} = \mathbb{R}$. We denote  by $S_F$ the set of points at which the map $F$ is not proper, {\it i.e.} 
$$S_F = \{ \alpha \in \mathbb{K}^n \text{ such that } \exists \{ \xi_k\} \subset \mathbb{K}^n, \vert \xi_k \vert \to \infty, F(\xi_k) \to \alpha \},$$
and call it the {\it asymptotic variety}. Notice that, by $\vert \xi_k \vert$  
we mean the usual Euclidean norm of $\xi_k$ in $\mathbb{K}^n$. 
In the complex case, one has:

\begin{theorem} [\cite{Jelonek1}] \label{theo-Jelonek1} 
{\rm If $F: \mathbb{C}^n \to \mathbb{C}^n$ is a generically finite polynomial map, then $S_F$ is either an $(n-1)$ pure dimensional  algebraic variety or the empty set.}
\end{theorem}

Recall that one says that $F$ is generically finite if there exists a subset  $U \subset \mathbb{C}^n$ dense  in  the target space $\mathbb{C}^n$ such that  for any 
$a \in U$, the fiber $F^{-1} (a)$ is finite. 

In the real case, if the asymptotic variety is non-empty, then its dimension can be any integer between 1 and $(n-1)$:

\begin{theorem} [\cite{Jelonek2}] \label{theo-Jelonek2}
{\rm Let $F: \mathbb{R}^n \to \mathbb{R}^n$ be a non-constant polynomial map. 
Then the set $S_F$ is  a closed, semi-algebraic set and for every non-empty connected component $S \subset S_F$, we have $1 \leq \dim S \leq n-1$. 
}
\end{theorem}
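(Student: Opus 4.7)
The plan is to decompose the statement into three independent claims---$S_F$ is closed, $S_F$ is semi-algebraic, and every non-empty connected component $S$ of $S_F$ satisfies $1 \le \dim S \le n-1$---and to treat them in that order.

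For closedness a direct diagonal argument suffices. If $(\alpha_m)_{m \geq 1}$ is a sequence in $S_F$ converging to $\alpha$, then by the definition of $S_F$ one can, for each $m$, select $\xi_m \in \R^n$ with $|\xi_m| \geq m$ and $|F(\xi_m) - \alpha_m| < 1/m$. The first inequality forces $|\xi_m| \to \infty$ and the second forces $F(\xi_m) \to \alpha$, so $\alpha \in S_F$.

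For semi-algebraicity the natural tool is the Tarski--Seidenberg theorem. The plan is to write
$$S_F = \bigl\{\alpha \in \R^n : \forall R > 0,\ \forall \varepsilon > 0,\ \exists \xi \in \R^n,\ |\xi|^2 > R \ \text{and}\ |F(\xi) - \alpha|^2 < \varepsilon\bigr\},$$
which is defined by a first-order formula with polynomial atomic formulas over the ordered field $\R$. Tarski--Seidenberg quantifier elimination then produces a quantifier-free formula defining $S_F$, so $S_F$ is semi-algebraic.

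For the dimension bounds, the upper bound $\dim S_F \le n-1$ follows from generic properness: $F$ is generically finite and the set of values at which $F$ fails to be proper is disjoint from a non-empty semi-algebraic open subset of the target, so $S_F$ has empty interior in $\R^n$. For the lower bound on a connected component $S$, suppose by contradiction that some $\alpha \in S$ is isolated in $S_F$. Choose a closed ball $\overline{B}(\alpha, \delta)$ whose intersection with $S_F$ is exactly $\{\alpha\}$; at every other point of this ball $F$ is proper. Semi-algebraic triviality (Hardt's theorem) applied to $F$ over the punctured ball $\overline{B}(\alpha, \delta) \setminus \{\alpha\}$ then yields a uniform bound on the diameter of the fibres of $F$ over that set, contradicting the existence of a sequence $(\xi_k)$ with $|\xi_k| \to \infty$ and $F(\xi_k) \to \alpha$.

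The hard part will be this last step. The delicate point is to rule out that ``escape'' to infinity concentrates on a single target point, which requires a uniform estimate on preimage diameters as the target tends to $\alpha$. One must invoke semi-algebraic triviality over a punctured semi-algebraic neighborhood of $\alpha$, or equivalently a curve-selection argument, to transfer properness at the non-isolated nearby points into a contradiction at $\alpha$ itself.
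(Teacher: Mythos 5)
The paper itself contains no proof of this statement: it is quoted directly from Jelonek's article [J2] as a known result, so your proposal can only be judged against the argument in that reference. Your first two claims are fine: the diagonal argument for closedness is correct, and the first-order description of $S_F$ followed by Tarski--Seidenberg is exactly the standard proof of semi-algebraicity. The upper bound needs a small repair: a non-constant polynomial map need not be generically finite (take $F(x,y)=(x,x)$), so you must first treat the non-dominant case, where $S_F\subset\overline{F(\R^n)}$ already has dimension at most $n-1$, and only in the dominant case invoke generic finiteness and generic triviality to produce a dense open subset of the target over which $F$ is a finite covering, hence proper; ``disjoint from some non-empty open set'' as you wrote it only gives non-density, whereas you need the good locus to be dense to conclude that the semi-algebraic set $S_F$ has empty interior.

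The genuine gap is in the lower bound. The step ``Hardt triviality over the punctured ball yields a uniform bound on the diameter of the fibres'' is false: semi-algebraic triviality is a purely topological statement and carries no metric control, and in any case what your contradiction requires is a uniform bound on $\sup\{|\xi|:\xi\in F^{-1}(\beta)\}$, not on fibre diameters. For $F(x,y)=(x,xy)$ the restriction of $F$ over $T=\{(a,b): 0<a<1,\ |b|<1\}$ is a semi-algebraically trivial fibration with one-point fibres, each individual fibre is compact and $F$ is proper over every compact subset of $\{a\neq 0\}$, yet $F^{-1}(T)$ is unbounded and the fibres recede to infinity as $a\to 0$. This is precisely the phenomenon you are trying to exclude, so pointwise properness at every $\beta\neq\alpha$ near $\alpha$ cannot by itself contradict $\alpha\in S_F$. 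Jelonek's actual proof of $\dim S\geq 1$ is the substantive content of the theorem: from a semi-algebraic arc $\gamma(t)$ with $|\gamma(t)|\to\infty$ and $F(\gamma(t))\to\alpha$ he builds, out of its parametrization at infinity, a one-parameter family of such arcs whose limit values sweep out a non-constant semi-algebraic (indeed polynomial) curve through $\alpha$ contained in $S_F$; that is, $S_F$ is $\R$-uniruled, which immediately forbids zero-dimensional components. Your sketch correctly locates the difficulty but does not supply this construction, and the mechanism you propose in its place would fail.
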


In the following we provide an example to illustrate Theorem \ref{theo-Jelonek1} and  Theorem \ref{theo-Jelonek2} and to show that the asymptotic set of the same map may be different in complex and real situations. 

\begin{example} \label{example_asymp_set}
{\rm Let $F: \mathbb{C}^3 \to \mathbb{C}^3$ such that 
$$F(x, y,z) = (x, y, (x^2 + y^2) z).$$
It is easy to see that $F$ is generically finite since the subset $U=\{(\alpha, \beta, \gamma) \in \mathbb{C}^3: \alpha^2 + \beta^2 \neq 0\}$  is dense in  the target space and $F\vert_{F^{-1}(U)}: {F^{-1}(U)} \to U$ is bijective. 

We determine now the asymptotic variety of $F$. Assume that $\{\xi_k\}=\{(x_k, y_k, z_k)\}$
 is a sequence  
tending to infinity  in the source space such that its image does not tend to infinity. 
Hence the coordinates $x_k$ and $y_k$ cannot tend to infinity. 
Therefore, $z_k$ must tend to infinity. 
Since $(x_k^2 + y_k^2) z_k^2$ cannot tend to infinity, 
then $(x_k^2 + y_k^2)$ must tend to zero. 
Consequenty, the asymptotic variety of $F$ is the algebraic variety of equation $\alpha^2 + \beta^2 = 0.$ 

As an illustration, let us take the sequence $\{ (a + 1/k, b + 1/k, c k) \}$  tending to infinity in the source space such that $a^2 + b^2=0$. Then its image tends to $(a, b, 2(a + b)c)$. This point belongs to the hypersurface $\alpha^2 + \beta^2 = 0.$ 

Notice that, if we replace $\mathbb{C}$ by $\mathbb{R}$, we get the same equation for the asymptotic variety. However, the equation $\alpha^2 + \beta^2 = 0$ reduces to a line  in $\mathbb{R}^3$, which is not  anymore a hypersurface.  

}
\end{example}

\subsection{Valette varieties}

\subsubsection{Construction} \label{ConstructionValette}

Valette varieties $V_F$ are constructed originally in \cite{Valette}.  
In this section, we recall briefly  this construction \cite[Proposition 2.3]{Valette}: 
Let $F: \mathbb{C}^n \to \mathbb{C}^n$ be a polynomial map, the construction of Valette varieties associated to $F$ consists of the following steps:

{\bf Step 1:} Consider $F$ as a real map $F: \mathbb{R}^{2n} \to \mathbb{R}^{2n}$. 
 Determine  the set of critical points ${\rm Sing} (F)$ of $F$.

{\bf Step 2:} Choose a covering $\{ U_1, \ldots , U_p \}$ of $M_F = \mathbb{R}^{2n} \setminus {\rm Sing}(F)$ by semi-algebraic open subsets (in $\mathbb{R}^{2n}$) such that on every element of this covering, the map $F$ induces a diffeomorphism onto its image. 
 Choose semi-algebraic closed  subsets $V_i \subset U_i$ (in $M_F$) which cover $M_F$ as well. 

{\bf Step 3:} For each $ i =1, \ldots , p$, choose a Nash function $\psi_i : M_F \to \mathbb{R}$,  such that:
\begin{enumerate}
\item[(a)] $\psi_i$ is positive on $V_i$ and negative on $M_F \setminus U_i$, 
\item[(b)] $\psi_i (\xi_k)$ tends to  zero whenever $\{\xi_k\} \subset M_F$ tends to infinity or to a point in 
${\rm Sing} (F)$.
\end{enumerate}

Recall that a  {\it Nash function}  $\psi : W \rightarrow \mathbb{R}$ defined on $W \subset \mathbb{R}^{2n}$ 
is an analytic function such that there exists a non trivial polynomial $P: W \times \mathbb{R} \to \mathbb{R}$ such that $P(x, \psi(x))=0$ for any $x \in W$. 

The existence of Nash functions $\psi_i$ comes from Mostowski's Separation Lemma (see Separation Lemma in \cite{Mos}, page 246).

{\bf Step 4:} Determine the closure of the image of $M_F$ by $(F, \psi_1, \ldots, \psi_p)$ in $\mathbb{R}^{2n+p}$, that means: 
$$V_F : = \overline{(F, \psi_1, \ldots, \psi_p)(M_F)},$$ 
we obtain a Valette variety associated to the given map $F$ and the chosen covering $\{U_1, \dots, U_p\}$ and the chosen  Nash functions $\psi_1, \dots, \psi_p$.

Notice that we may have many ways to choose the covering $\{ U_1, \ldots , U_p \}$ and, moreover, with each covering $\{ U_1, \ldots , U_p \}$, 
we may have many choices of Nash functions $\psi_1, \ldots, \psi_p$. Then we may have  more than one Valette variety associated to a given polynomial map. However, the singular locus of any Valette variety is always contained in $(K_0(F) \cup S_F ) \times \{0_{\mathbb{R}^p}\}$, which depends only on the given map $F$.

In the real case,  {\it i.e.} if $F: \mathbb{R}^n \to \mathbb{R}^n$, the real map in the first step is replaced by the map $F$ itseft and the construction follows the same process. 
However, in this case, Valette varieties are no longer  pseudomanifolds in general. They are simply real semi-algebraic varieties (see Remark 2.7 of \cite{Valette} or Proposition 3.8 of \cite{ThuyValette}).  The reason is that, in the real case, the dimension of the asymptotic variety may be greater than $n-2$ (Theorem \ref{theo-Jelonek2}, 
see Example \ref{example_asymp_set} for the illustration). 
Then $K_0(F) \cup S_F$ has no reason to be of codimension 2. 

In the following we illustrate the Valette's construction above by a simple example $F: \mathbb{R} \to \mathbb{R}$.  
For more examples, one can see  \cite[Exemple 2.8]{Valette} and  \cite[Exemple 3.9]{ThuyValette}.

\subsubsection{An example}

\begin{example}  \label{example_ValetteVariety}
{\rm 
Let $F: \mathbb{R}_x \to \mathbb{R}_\alpha$ be a polynomial map defined by $F(x) = x^2$. 
 Here by $\mathbb{R}_x$ and $\mathbb{R}_\alpha$ we denote the source space and the target space  respectively. 
 We follow the four steps in the Valette's construction in  \ref{ConstructionValette}: 
 
 \medskip 
 
 {\bf Step 1:} By an easy calculation, we have 
$$S_F = \emptyset, \quad {\rm Sing}(F) = \{0\}, \quad K_0(F) = \{ 0\}. $$

\medskip 

{\bf Step 2:} The set ${\rm Sing}(F)$ divide $\mathbb{R}_x$ into two subsets: 
$$U_1 = \{x \in \mathbb{R}_x: x >0\} \quad,  \quad U_2 = \{x \in \mathbb{R}_x: x <0\}$$
and  $F(U_1) = F(U_2) = \{ \alpha \in \mathbb{R}_\alpha: \alpha >0\}$. 

These subsets $U_1$ and $U_2$ are semi-algebraic open subsets in $\mathbb{R}_x$ and on each $U_i$, for $i=1, 2$, the map $F$ induces a diffeomorphism onto its image. 
In this case, we can choose $\{U_1, U_2\}$  as a covering  of $M_F = \mathbb{R} \setminus {\rm Sing} (F) = \mathbb{R} \setminus \{0\}$. 

We choose $V_1 = U_1$ and $V_2 = U_2$. In this case, $V_1$ and $V_2$ are closed subsets of $M_F$ and they cover $M_F$ as well. 

\medskip 

{\bf Step 3:} We choose the Nash functions: 
$$\psi_1: M_F \to \mathbb{R}, \quad \psi_1(x):= \frac{x}{1+x^2} 
\; \; \; \text{ and } \; \; \;  \psi_2: M_F \to \mathbb{R}, \quad \psi_2(x):= \frac{-x}{1+x^2}.$$ 
We see that: 
\begin{enumerate}
\item $\psi_1$ is a Nash function because there exists the polynomial $P(x,y) = (x^2+1)y - 1$ such that $P(x, \psi(x)) =0$ for any $x \in M_F$. 
With the same way, $\psi_2$ is also a Nash function. 
\item $\psi_i$ is positive on $V_i = U_i$ and negative on $M_F \setminus U_i = U_j$, for $i =1, 2$ and $j \neq i$. 
\item If $\{x_k\} \subset M_F$ is a sequence tending to infinity, then $\psi_i(x_k)$ tends to $0$ and, 
if $\{x_k\} \subset M_F$ is a sequence tending to $0 \in {\rm Sing}(F)$, then $\psi_i(x_k)$ also tends to $0$. 
\end{enumerate}
Then the functions $\psi_i$ satisfy all the properties in the Valette's construction. 

\medskip

{\bf Step 4:} Now, in order to determine the Valette variety associated to the given $F$ and  the chosen covering $\{U_1, U_2\}$ and the chosen Nash functions $\psi_1, \psi_2$, we need to calculate the closure of $(F, \psi_1, \psi_2)(M_F)$ in $\mathbb{R}^3$, that means, we have to calculate   
$$
\overline{(F, \psi_1, \psi_2)(M_F)} =  \overline{ \left\{ \left( x^2, \frac{x}{1+x^2}, \frac{-x}{1+x^2} \right) : x \in M_F\right\}}.$$
By item (3) in Step 3, we have: 
  
  $$V_F= \left\{ \left( x^2, \frac{x}{1+x^2}, \frac{-x}{1+x^2} \right) : x \neq 0 \right\} \cup \{ (\alpha,0,0) : \exists \{ x_k\} \subset M_F$$  
  $$ \text{ such that } x_k \to 0 \text{ and } F(x_k) \to \alpha\}.$$

It is easy to see that if a sequence $\{ x_k\}$ tends to zero, then $(F, \psi_1, \psi_2)(x_k)$ tends to the origin of $\mathbb{R}^3$, 
which coincides with $(F, \psi_1, \psi_2)(0)$. 
It follows that the closure of  $(F, \psi_1, \psi_2)(M_F)$ is, in fact, the set $(F, \psi_1, \psi_2)(\mathbb{R}^2)$, which is smooth. Then the Valette variety $V_F$ associated to $F(x) = x^2$ and the chosen covering $\{U_1, U_2\}$ and the chosen Nash functions $\psi_1, \psi_2$ above is a smooth space  curve and it can be  graphed  
\footnote{The figure is graphed by the Software online: 
\url{http://www.math.uri.edu/~bkaskosz/flashmo/parcur/}}  in Figure \ref{Valette_Variety_F=x2} (the Valette variety in this case in the red curve). 

\begin{figure}[h!]  
\begin{center}
\includegraphics[scale=0.45]{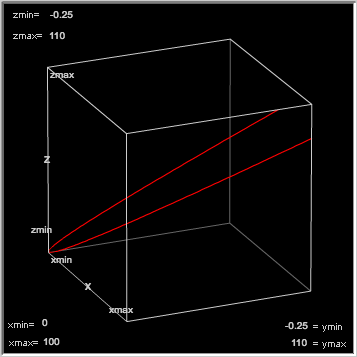} 
\caption{A Valette variety of the polynomial map $F(x) =x^2$.} \label{Valette_Variety_F=x2}
\end{center}\end{figure}

}
\end{example}

\subsubsection{Intersection homology of Valette singular varieties} 

 The intersection homology of Valette semi-algebraic pseudomanifolds $V_F$ associated to a given nonvanishing Jacobian polynomial map $F : \mathbb{C}^2 \to \mathbb{C}^2$  describes the geometry of singularities at infinity of the map (Theorem 3.2 of \cite{Valette}). This result is generalized in  \cite[Theorem 4.5]{ThuyValette} in the case of higher dimensions. In the following we simply recall these results. Notice that   
the following results hold for any Valette variety associated to a given polynomial map. 
Notice  also that the following theorems hold for  the intersection homology groups with both compact supports and closed supports.

\begin{theorem} [\cite{Valette}] \label{valette4}
{\rm Let $F: \mathbb{C}^2 \to \mathbb{C}^2$ be a polynomial map with nowhere vanishing Jacobian. The following conditions are equivalent:
\begin{enumerate}
\item $F$ is non proper.

\item  $IH_2^{\overline{p}}(V_F, \mathbb{R}) \neq 0$ for any (or some) perversity $\overline{p}$.
\end{enumerate}
}

\end{theorem}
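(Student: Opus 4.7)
My plan is to establish the equivalence in two directions after a preliminary structural observation about $V_F$. Viewing $F: \C^2 \to \C^2$ as a real map $\R^4 \to \R^4$, the real Jacobian equals $|\det JF|^2 > 0$, so $\mathrm{Sing}(F) = \emptyset$ and $K_0(F) = \emptyset$; by the construction of Section~\ref{ConstructionValette} the singular locus of $V_F$ is therefore contained in $S_F \times \{0_{\R^p}\}$. By Theorem~\ref{Jelonek1}, when non-empty $S_F$ has pure complex dimension $1$ (real dimension $2$), so $V_F$ is a real $4$-dimensional pseudomanifold whose singular stratum has real dimension at most $2$. The ``any or some'' clause in the statement should then be read as the equivalence of three conditions (non-properness, non-triviality for some perversity, non-triviality for every perversity); this collapses to one equivalence because the perversity choices only affect the allowability in codimensions $\geq 3$, where our singular locus is either empty or $0$-dimensional.

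For the easy direction $(2) \Rightarrow (1)$, assume $F$ is proper. Then $F$ is a proper local diffeomorphism $\R^4 \to \R^4$, and since the target is simply connected and $M_F = \R^4$ is connected, $F$ must be a homeomorphism. The Nash functions $\psi_i$ are bounded, so $(F, \psi_1, \dots, \psi_p) : \R^4 \to \R^{4+p}$ is a proper injective embedding whose image is already closed; hence $V_F$ is homeomorphic to $\R^4$, its intersection homology coincides with ordinary singular homology, and $IH_2^{\bar p}(V_F, \R) = 0$.

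The hard direction $(1) \Rightarrow (2)$ is the core of the theorem. Pick a generic smooth point $\alpha \in S_F$ and a small real $2$-disk $D \subset \C^2$ transverse to $S_F$ at $\alpha$ with $D \setminus \{\alpha\} \subset \C^2 \setminus S_F$; over $\C^2 \setminus S_F$, $F$ restricts to a proper local diffeomorphism, hence a finite covering of some degree $d$. The crucial structural point is that at least one connected component $\Sigma_{\infty}$ of $F^{-1}(D)$ is non-compact, for otherwise $F^{-1}(\overline{D})$ would be compact, contradicting the definition of $\alpha \in S_F$ via some sequence $\xi_k \to \infty$ with $F(\xi_k) \to \alpha$. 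Along $\Sigma_{\infty}$, $(\psi_1, \dots, \psi_p) \to 0$ as the sheet escapes to infinity, so the image of $\Sigma_{\infty}$ in $V_F$ is a topological disk whose only intersection with the singular stratum is the added limit point $(\alpha, 0_{\R^p})$. A candidate $2$-cycle is then obtained by combining the image of $\Sigma_{\infty}$ with the image in $V_F$ of a second preimage disk of $D$ (taken on one of the other sheets of the cover) so that the two boundary circles cancel; meeting $\mathrm{Sing}(V_F)$ only at $(\alpha, 0)$, such a chain is $(\bar p, 2)$-allowable for every perversity.

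The main obstacle is showing that this cycle is not an intersection-homology boundary. My strategy is a linking-number argument: pair the class against a small $2$-disk transverse to $S_F \times \{0\}$ via the Poincar\'e--Lefschetz intersection pairing on $V_F$, and show this pairing detects the ``excess sheet'' $\Sigma_{\infty}$. A convenient framework is the Mayer--Vietoris sequence for intersection homology applied to a tubular neighbourhood of $S_F \times \{0\}$ and its complement; the connecting homomorphism then encodes the monodromy of the finite cover around $S_F$, and non-properness produces a non-zero class in $IH_2^{\bar p}(V_F, \R)$. Verifying that the resulting invariant is independent of the choice of covering $\{U_i\}$ and Nash functions $\psi_i$, and that the conclusion holds in both the compact-support and the closed-support versions of $IH_*$, will be the delicate technical part, as carried out in \cite{Valette}.
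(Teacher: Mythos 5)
The paper you are quoting from does not prove this statement: Theorem~\ref{valette4} is cited verbatim from \cite{Valette} (Theorem~3.2 there), so your attempt can only be compared with the argument in that reference. Your preliminaries and the direction $(2)\Rightarrow(1)$ are essentially correct: holomorphy gives $Sing(F)=\emptyset$, properness plus local diffeomorphism onto the simply connected target makes $F$ a global diffeomorphism, the graph $(F,\psi_1,\dots,\psi_p)(\R^4)$ is then already closed, and $V_F\cong\R^4$ has vanishing (intersection) homology in degree $2$ for both compact and closed supports; the remark that only $p_2=0$ ever enters, so that all perversities give the same group, correctly accounts for the ``any or some''.

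The gap is in $(1)\Rightarrow(2)$, and it is twofold. First, the candidate cycle does not exist as described. The map $(F,\psi_1,\dots,\psi_p)$ is injective on $M_F$: if $F(x)=F(y)$ with $x\neq y$, pick $i$ with $x\in V_i$; then $y\notin U_i$ (else $F|_{U_i}$ would not be injective), so $\psi_i(x)>0>\psi_i(y)$. Hence the images in $V_F$ of two distinct sheets of $F^{-1}(D)$ are disjoint away from $S_F\times\{0_{\R^p}\}$ --- separating the leaves is precisely what the $\psi_i$ are for. Consequently the boundary circle of $\overline{(F,\psi)(\Sigma_\infty)}$ and the boundary circle of the image of a second preimage sheet lie on different leaves of $V_F$ over $\partial D$ and cannot ``cancel'' as chains; their difference is a $1$-cycle in $Reg(V_F)$ that must still be filled by an allowable $2$-chain, and neither the existence of such a filling nor the allowability and compactness of the resulting cycle is addressed. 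Second, and more seriously, the non-bounding argument --- the actual content of the implication --- is only named, not given: you list a linking pairing, a Mayer--Vietoris sequence and a monodromy computation as possible mechanisms and then explicitly defer the verification to \cite{Valette}. In that reference the step is carried out by an explicit intersection-pairing computation between a compactly supported and a closed supported allowable $2$-cycle, using generalized Poincar\'e duality for pseudomanifolds; without that (or an equivalent) computation, your argument produces at best a candidate class, not a proof that $IH_2^{\overline p}(V_F,\R)\neq 0$. As written, the hard direction is a plan rather than a proof.
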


\begin{theorem} [\cite{ThuyValette}]
{\rm Let $F = (F_1, \ldots, F_n): \mathbb{C}^n \rightarrow \mathbb{C}^n$ be a polynomial map with nowhere vanishing Jacobian. 
If {${\rm Rank}_{\mathbb{C}} {(D \hat {F_i})}_{i=1,\ldots,n} > n-2$}, where $\hat {F_i}$ is the leading form of $F_i$ and  $D \hat {F_i}$ is the (first) derivative of  $\hat {F_i}$, then the following conditions are equivalent:

\begin{enumerate}
\item $F$ is non proper.
\item $IH_2^{\overline{p}} (V_F, \mathbb{R}) \neq 0$ for any (or some) perversity $\overline{p}.$
\item $IH_{2n-2}^{\overline{p},cl} (V_F, \mathbb{R}) \neq 0$ for any (or some) perversity $\overline{p}.$
\end{enumerate}
}
\end{theorem}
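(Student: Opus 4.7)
The plan is to close the loop $(1) \Rightarrow (2) \Rightarrow (3) \Rightarrow (1)$, with the substantive content concentrated on extracting a nontrivial intersection class from non-properness. I would first dispatch the easy half. If $F$ is proper, then together with the nonvanishing Jacobian hypothesis $F$ is a finite \'etale covering of $\C^n$, so $K_0(F) = S_F = \emptyset$. Because the singular locus of any Valette variety $V_F$ is contained in $(K_0(F) \cup S_F) \times \{0_{\R^p}\}$, it must be empty; hence $V_F$ is a smooth $2n$-manifold and $IH_*^{\bar p}(V_F) = H_*(V_F)$ for every perversity. A direct covering-space computation then shows that the homology vanishes in real degree $2$ and dually in closed-support degree $2n-2$, negating (2) and (3).

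For the substantive direction $(1) \Rightarrow (2)$, I would begin by invoking Theorem \ref{Jelonek1} to ensure $S_F$ is pure of complex dimension $n-1$. The rank hypothesis $\rang_{\C}(D \hat F_i)_{i=1,\ldots,n} > n-2$ on the leading forms is the decisive extra ingredient: I would use it to guarantee that on a Zariski-dense open subset of $\Reg(S_F)$, the branches of $F$ escaping to infinity separate cleanly when distinguished by the Nash functions $\psi_i$ chosen in the construction of $V_F$. This pins down the local structure of $V_F$ near a smooth point $\alpha_0 \in S_F$: a neighborhood of $(\alpha_0, 0_{\R^p})$ in $V_F$ looks like a product of $\C^{n-1}$ with a cone on a link which is a disjoint union of spheres indexed by these branches. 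I would then build a candidate 2-cycle $\sigma$ by choosing a small complex disk $\Delta \subset \C^n$ transverse to $S_F$ at $\alpha_0$, pulling $\Delta \setminus \{\alpha_0\}$ back through one asymptotic sheet of $F$, pushing it into $V_F$ via $(F, \psi_1, \ldots, \psi_p)$, and closing it up through the stratum point $(\alpha_0, 0_{\R^p})$. The zero-perversity allowability condition is satisfied because the resulting 2-chain meets the singular locus only at that single point, and higher perversities only relax the condition further.

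The main obstacle is to show that $\sigma$ is not the boundary of any allowable 3-chain, and simultaneously to derive $(2) \Leftrightarrow (3)$. My plan here is to construct a dual closed-support $(2n-2)$-cycle $\tau$ supported on a normal slice to $S_F$ pushed through a different sheet, and to verify that $\sigma$ and $\tau$ pair nontrivially under the Goresky--MacPherson intersection pairing $IH_2^{\bar p}(V_F) \otimes IH_{2n-2}^{\bar q, \mathrm{cl}}(V_F) \to \R$ for complementary perversities. Nondegeneracy of this pairing --- Poincar\'e duality for pseudomanifolds --- then yields $(2) \Leftrightarrow (3)$ and independence on perversity in one stroke. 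The technically delicate point is verifying that $V_F$ is genuinely a pseudomanifold under the rank hypothesis, i.e.\ that the singular stratum has real codimension at least $2$; this is exactly where the leading-form rank condition is indispensable, since it rules out the real-style pathology of Example \ref{example_asymp_set} in which $S_F$ drops dimension and the duality setup collapses.
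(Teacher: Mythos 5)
First, note that the paper you are working from does not actually prove this statement: it is recalled verbatim from \cite{ThuyValette} and used as a black box, so there is no in-paper argument to compare against. Measured against the actual proof in \cite{ThuyValette} (and its $n=2$ prototype in \cite{Valette}), your overall architecture is the right one --- properness plus nonvanishing Jacobian forces $F$ to be a trivial covering of the simply connected $\C^n$, hence $V_F$ is a contractible manifold and (2), (3) fail; conversely one builds a small $(\bar p,2)$-allowable cycle from a transverse slice at a generic point of $S_F$, a dual closed-support $(2n-2)$-cycle, and detects nontriviality through the generalized Poincar\'e duality pairing. So the skeleton is sound.

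However, you have misidentified where the hypothesis $\rang_{\C}(D\hat F_i) > n-2$ is actually needed, and this is a genuine gap rather than a presentational one. You claim it is ``indispensable'' for $V_F$ to be a pseudomanifold with codimension-two singular locus, ruling out the dimension drop of Example \ref{example_asymp_set}. That is not so: Theorem \ref{Jelonek1} already guarantees, with no hypothesis on leading forms, that for a generically finite complex polynomial map $S_F$ is either empty or pure of complex dimension $n-1$, hence $S_F\times\{0_{\R^p}\}$ has real codimension exactly $2$ in $V_F$; the dimension drop is a purely real phenomenon. (Incidentally, this codimension-two fact also gives perversity independence for free, since $p_2=0$ for every perversity --- you do not need the duality pairing ``in one stroke'' for that.) The rank condition on the leading forms is instead what controls the geometry of $V_F$ \emph{at infinity}: it is used in \cite{ThuyValette} to bound the dimension of the exceptional set of asymptotic directions so that the closed-support $(2n-2)$-cycle is well defined and allowable, and so that the local cone structure over generic points of $S_F$ (your ``disjoint union of spheres'' link) actually holds. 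As written, your proposal asserts exactly these two points --- the local conical structure near $\Reg(S_F)\times\{0\}$ and the nonvanishing of the intersection number of $\sigma$ with $\tau$ --- without proof, and these are precisely the steps that carry the mathematical content and that the rank hypothesis exists to make work.
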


\section{Intersection Homology of  a Valette variety associated to the Pinchuk map $\mathcal{P}$} \label{MainResult}

Let us recall that by the Pinchuk map $\mathcal{P}$ we mean the smallest degree one mentioned in  \cite[page 241]{Essen} and studied in the series paper \cite{Andrew1, Andrew2, Andrew3, Andrew4, Andrew5}, constructed as follows: denote
$$t= xy -1, \quad h=t(xt+1), \quad f=(xt+1)^2(t^2+y)$$
 then $\mathcal{P} = (p, q)$ with
$$p= f+h, \quad q=-t^2 -6th(h+1) -u(f,h)$$
where 
$$u(f, h) = 170fh + 91h^2 + 195fh^2 + 69h^3 + 75fh^3 + \dfrac{75}{4}h^4.$$
 In this case ${\rm det} (J\mathcal(P)(x,y)) >0$ for any $(x, y) \in \mathbb{R}^2$ but $\mathcal{P}$ is not injective. 

In order to prove the main result of this paper (Theorem \ref{Inter_Hom_Pinchuk}), 
we need Lemma \ref{LemmaVF} and Lemma \ref{Lemma-covering}. 

\begin{lemma} \label{LemmaVF}

Let $F: \mathbb{K}^n \to \mathbb{K}^n$ be a polynomial map, where $\mathbb{K} = \mathbb{C}$ or $\mathbb{C}= \mathbb{R}$. 
Then any Valette variety $V_F$ is non-compact and 
\begin{equationth} \label{Equ-LemmaVF}
V_F= (F, \psi_1, \ldots, \psi_p)(M_F) \cup ((K_0(F) \cup S_F) \times \{0_{\mathbb{R}^p}\}). 
\end{equationth}
\end{lemma}

\begin{proof}

 The fact that Valette varieties are non-compact comes directly from its definition 
 (see Section  \ref{ConstructionValette}) .  
 
 Now to prove the equality (\ref{Equ-LemmaVF}) for the complex case $\mathbb{K} = \mathbb{C}$ and the real case follows with the same way. 
 We prove at first 
$(F, \psi_1, \ldots, \psi_p)(M_F) \cup ((K_0(F) \cup S_F) \times \{0_{\mathbb{R}^p}\}) \subset V_F$. 
 We observe first that it comes directly form the definition that $(F, \psi_1, \ldots, \psi_p)(M_F) \subset V_F$. 
 Then we need to prove only that $S_F  \times {\{0_{\mathbb{R}^p}\}}$ and  $K_0(F) \times \{0_{\mathbb{R}^p}\}$ are contained in $V_F$. 

  Let us take a point $\alpha \in S_F$. Then there exists a sequence $\{\xi_k\} \subset \mathbb{C}^n$ (in the source space) tending to infinity such that $F(\xi_k)$ tends to $\alpha$. Let $\{\xi_{k_\ell}\}$ 
be a subsequence of   $\{\xi_k\}$ consisting of all points which do not belong to ${\rm Sing} (F)$, then $\{\xi_{k_\ell}\} \subset M_F$. 
Notice that the sequence $\{\xi_{k_\ell}\}$ also tends to infinity. 
Moreover, $F(\xi_{k_\ell})$  tends to $\alpha$. 
 By the construction of Valette varieties (see Section \ref{ConstructionValette}), 
 the image of $\xi_{k_\ell}$ by Nash function $\psi_i$ tend to zero, for $i=1, \ldots, p$. 
Then $(F, \psi_1, \ldots, \psi_p)(\xi_{k_\ell})$ tends to 
$(\alpha, 0_{\mathbb{R}^{p}})$, it follows that $(\alpha, 0_{\mathbb{R}^{p}})$ is an accumulation point of $(F, \psi_1, \ldots, \psi_p)(M_F)$. 
By the definition of $V_F$, the point $(\alpha, 0_{\mathbb{R}^{p}})$ belongs to $V_F$.
 Consequently, the subset $S_F  \times \{0_{\mathbb{R}^p}\}$ is contained in $V_F$. 

 We prove now that $K_0(F) \times \{0_{\mathbb{R}^p}\}$ is contained in $V_F$. For this, let us take a point $\alpha \in K_0(F)$. Then there exists a point $x \in {\rm Sing} (F)$ such that $\alpha = F(x)$. 
 Take a sequence $\{\xi_k\} \subset M_F$ such that $\xi_k$ tends to $x$. Again, by the construction of Valette varieties, the image  of $\xi_k$ by the chosen Nash function $\psi_i$  must tend to zero, for $i = 1, \ldots, p$. Then $(F, \psi_1, \ldots, \psi_p)(\xi_k)$ tends to $(F(x), 0_{\mathbb{R}^{p}}) = (\alpha, 0_{\mathbb{R}^{p}})$. That means  $(\alpha, 0_{\mathbb{R}^{p}})$, which is a point in $K_0(F) \times \{0_{\mathbb{R}^p}\}$, is 
an accumulation point of $(F, \psi_1, \ldots, \psi_p)(M_F)$. Consequently, $(\alpha, 0_{\mathbb{R}^{p}})$ belongs to $V_F$.

We prove now the inclusion $V_F \subset (F, \psi_1, \ldots, \psi_p)(M_F) \cup ((K_0(F) \cup S_F) \times \{0_{\mathbb{R}^p}\})$. 
Assume that $\beta \in V_F$ and $\beta \notin (F, \psi_1, \ldots, \psi_p)(M_F)$, we will prove that 
$\beta \in (K_0(F) \cup S_F) \times \{0_{\mathbb{R}^p}\}$. 
Since $\beta \in V_F$ and  $\beta \notin (F, \psi_1, \ldots, \psi_p)(M_F)$ 
then by the definition of $V_F$, 
there exists a sequence $\{\xi_k\} \subset M_F$ such that $(F, \psi_1, \ldots, \psi_p)(\xi_k)$ tends to $\beta$. Assume that $\xi_k$ tends to $x$, we claim that $x$ must be infinity or a point of  ${\rm Sing} (F)$. 
 In fact, if $x$ is neither infinity nor a point in  ${\rm Sing} (F)$, then $x$ belongs to $M_F$. Since $F$ is a polynomial map and $\psi_1, \ldots, \psi_p$ are Nash functions, then $(F, \psi_1, \ldots, \psi_p)(\xi_k)$ tends to  $(F, \psi_1, \ldots, \psi_p)(x)$. 
It follows $\beta= (F, \psi_1, \ldots, \psi_p)(x)$ and then $\beta$ belongs to 
$(F, \psi_1, \ldots, \psi_p)(M_F)$ and that provides a contradiction. 
Now if $\xi_k$ tends to infinity, then since $(F, \psi_1, \ldots, \psi_p)(\xi_k)$ does not tend to infinity, $F(\xi_k)$ must tend to a point in $S_F$.
If $\xi_k$ tends to a point $x \in {\rm Sing} (F)$, then $F(\xi_k)$ tends to a point in $K_0(F)$. In both of these cases, $\psi_i(\xi_k)$ tends to zero, for $i=1, \ldots, p$ and it follows $\beta \in (K_0(F) \cup S_F) \times \{0_{\mathbb{R}^p}\}.$ 
\end{proof}

\begin{lemma} \label{Lemma-covering}

There exists a covering $\{ U_1, U_2, U_3, U_4 \}$ of $\mathbb{R}^{2}$ by semi-algebraic open subsets such that on every element of this covering, the Pinchuk map $\mathcal{P}$ induces a diffeomorphism onto its image. 
Moreover, there exists also semi-algebraic closed  subsets $V_i \subset U_i$ in $\mathbb{R}^2$ which cover $\mathbb{R}^2$ as well. 
\end{lemma}

\begin{proof}

Let us remark first that since $K_0(\mathcal{P}) = \emptyset$, 
then the singular locus ${\rm Sing} (\mathcal{P})$ of $\mathcal{P}$ is empty. 
The asymptotic variety of the Pinchuk map $\mathcal{P}$ and its inverse image, which is called 
{\it asymptotic flower} in the papers \cite{Andrew1, Andrew2, Andrew3}, are fully described and sketched in these papers.  
In Figure \ref{DessinCampell}, we copy from Figure 2 and Figure 3 from \cite[ Section 5, pages 31-33]{Andrew3} \footnote{The use of these figures was asked the authorization of the author.}  of the asymptotic set and the asymptotic flower of the Pinchuk map $\mathbb{P}$.

\begin{figure}[h!]  
\begin{center}
\includegraphics[scale=0.8]{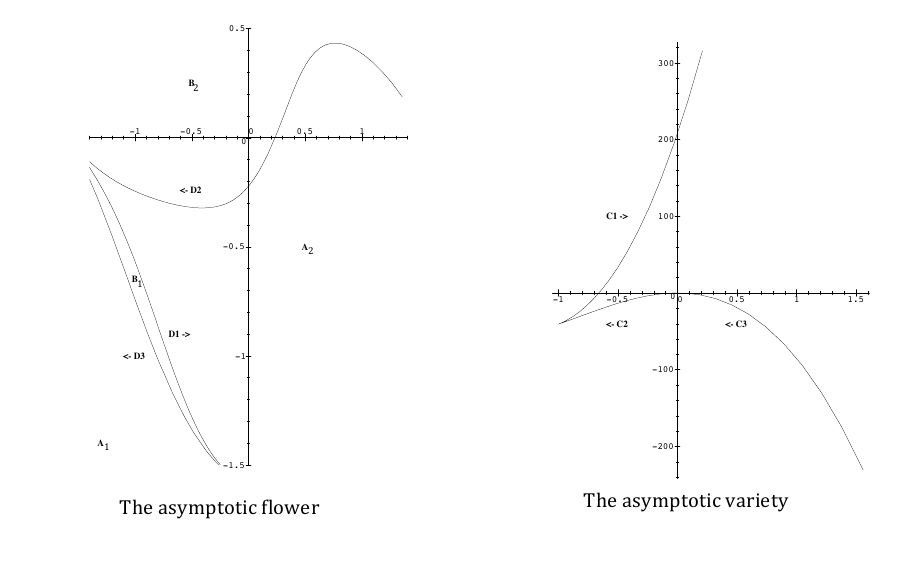} 
\caption{The ``asymptotic flower''  and the asymptotic variety of $\mathcal{P}$ described in \cite{Andrew3}.}
\label{DessinCampell} 
\end{center}\end{figure}

We use the following properties: 

\begin{enumerate}

\item (See \cite{Andrew1}, page 1) The asymptotic variety $S_{\mathcal{P}}$ of the Pinchuk's map $\mathcal{P}$  is a curve parametrized by the bijective polynomial: 
$$p(s) = s^2-1, \quad  \quad q(s) = -75 s^5 + \frac{345}{4}s^4 - 29s^3 + \frac{117}{2}s^2-\frac{163}{4},$$
where $s \in \mathbb{R}$. 

\item (See  \cite[Section 5, pages 32]{Andrew3}, see also \cite{Andrew2} and \cite[page 1]{Andrew1}) The asymptotic variety $S_{\mathcal{P}}$ intersects  
the vertical axis at $(0,0)$ and $(0,208)$ and its leftmost point is $(-1, -163/4)$. 
This leftmost point is also the only singular point of the curve $S_{\mathcal{P}}$.

\item (See  \cite[Section 5, pages 32]{Andrew3}, see also \cite{Andrew2})
 Exactly two points, the leftmost point $(-1, -163/4)$ and the origin $(0,0)$, have no inverse images. Moreover, every other point of the asymptotic variety $S_{\mathcal{P}}$ has exactly one inverse image. 
Furthermore, these two points $(0,0)$ and $(-1, -163/4)$ break up the asymptotic variety into three connected curves $C_1$, $C_2$ and $C_3$ (see Figure \ref{DessinCampell}).

\item (See  \cite[Section 5, pages 32]{Andrew3}, see also \cite{Andrew2}) The asymptotic flower is the union of three curves $D_1$, $D_2$ and $D_3$, 
which are the inverse images of $C_1, C_2$ and $C_3$, respectively, in the following way: two points $(-1, -163/4)$ and  $(0,0)$ that have no inverse images from the asymptotic variety break up the asymptotic flower into three connected curves $D_1, D_2$ and $D_3$ such that each point of each of the three curves $C_1, C_2$ and $C_3$ has exactly one inverse image (see Figure \ref{DessinCampell}).

\item (See  \cite[Section 5, pages 33]{Andrew3}, see also \cite{Andrew2}) Each of the three curves $D_1$, $D_2$, $D_3$ divides  the source space into simply connected parts, which may be described as the regions left and right of the curve, using the induced orientations to define left and right. Removing the curves $D_1, D_2$ and $D_3$, it leaves exactly four simply connected open components: two regions $A$ and two regions $B$.  
In Figure \ref{DessinCampell}, we denote by $A_1$ ({\it resp.}, $B_1$) the region $A$ ({\it resp.}, $B$) on the left and $A_2$ ({\it resp.}, $B_2$) is the region $A$ ({\it resp.}, $B$) on the right.
 Remark that the restriction of $\mathcal{P}$ to each of the regions $A_1, A_2, B_1, B_2$ is homeomorphism (see Figure \ref{DessinCampell}). 
\end{enumerate}

By the above properties, one can choose the semi-algebraic closed subset $\{V_i\}_{i=1,\ldots, \,4}$ as follows: 
$$V_1= A_1 \cup D_3, \quad V_2= B_1 \cup D_3 \cup D_1, \quad V_3 = A_2 \cup D_1 \cup D_2, \quad V_4= B_2 \cup D_2.$$

Moreover, by the item (5) above, one can choose an covering $\{U_i\}_{i=1,\ldots,\,4}$ of $\mathbb{R}^2$ by semi-algebraic open subsets such that: 
$U_i \supset V_i$ and the restriction  $\mathcal{P} \vert_{U_i}$ is  homeomorphism, for $i= 1, \ldots, \, 4.$ 

\end{proof}

\begin{theorem} \label{Inter_Hom_Pinchuk}  
 There exists a Valette variety $V_{\mathcal{P}}$ associated to the Pinchuk map $\mathcal{P}$  such that
$IH_1^{\bar{0}, {\rm c}} (V_\mathcal{P})=  IH_1^{\bar{0}, {\rm cl}} (V_\mathcal{P})= 0$.
\end{theorem}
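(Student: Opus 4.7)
The plan is to construct an explicit Valette variety $V_{\mathcal{P}}$ and then reduce the computation of $IH_1^{\bar 0}(V_{\mathcal{P}})$ to a connectivity argument on its regular part. Since $\det(J\mathcal{P}) > 0$ on all of $\R^2$, one has $\mathrm{Sing}(\mathcal{P}) = \emptyset$ and $K_0(\mathcal{P}) = \emptyset$, so $M_{\mathcal{P}} = \R^2$. I would take as covering $\{U_1, \dots, U_4\}$ a semi-algebraic open enlargement of the four simply-connected regions $R_1, R_2, R_3, R_4$ supplied by Proposition~\ref{propostion_Asymptoticset_Pinchuk}(3), enlarged to cover all of $\R^2$ and with $\mathcal{P}$ still a diffeomorphism on each $U_i$. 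With Nash functions $\psi_1, \dots, \psi_4$ chosen so that $\Phi := (\mathcal{P}, \psi_1, \dots, \psi_4) : \R^2 \to \R^6$ is injective, Lemma~\ref{propositionVF} gives
\[
V_{\mathcal{P}} \;=\; \Phi(\R^2) \,\cup\, \bigl(S_{\mathcal{P}} \times \{0_{\R^4}\}\bigr),
\]
whose regular part $\Phi(\R^2)$ is a smooth 2-surface homeomorphic to $\R^2$.

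Next I would stratify $V_{\mathcal{P}}$ by $V_2 = V_{\mathcal{P}}$, $V_1 = S_{\mathcal{P}} \times \{0\}$, and $V_0 = \{(-1, -163/4, 0),\ (0, 0, 0)\}$, the two no-preimage points of $S_{\mathcal{P}}$ identified in Proposition~\ref{propostion_Asymptoticset_Pinchuk}(2) (the first of which is also the unique singular point of the curve $S_{\mathcal{P}}$). Local topological triviality of this stratification would be verified using Campbell's description of the flower. For the zero perversity in dimension $2$, the allowability conditions on a $1$-chain $c$ reduce to $\dim(c \cap V_1) \le 0$ and $c \cap V_0 = \emptyset$, while for a $2$-chain $b$ the conditions $\dim(b \cap V_0) \le 0$ and $\dim(b \cap V_1) \le 1$ are automatic, since $V_0$ is finite and $V_1$ has dimension $1$. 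Thus an allowable $2$-chain is simply any $2$-chain whose boundary is an allowable $1$-chain.

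The heart of the proof is to show that every allowable $1$-cycle $c$ bounds a $2$-chain in $V_{\mathcal{P}}$. I would argue that $V_{\mathcal{P}}$ is simply connected: any loop can first be homotoped off the $0$-dimensional $V_0$ by transversality, and then pushed off $V_1$ into the regular part using the local structure of the attachment at infinity --- near each regular point of $V_1$, at most two ``sheets'' of $\Phi(\R^2)$ (corresponding to the two simply-connected regions on either side of a flower curve $D_i$) approach $V_1$ along sequences going to infinity in $\R^2$ asymptotic to $D_i$. Inside the contractible regular part $\Phi(\R^2) \cong \R^2$, the loop contracts, giving a bounding $2$-chain, which is then automatically allowable. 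For the closed-support case, the same argument applies, using that $H_1^{\mathrm{BM}}(\R^2) = 0$.

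The principal obstacle will be the precise analysis of the local topology of $V_{\mathcal{P}}$ near the singular stratum $V_1$, and especially near the two points of $V_0$ (where the curve $S_{\mathcal{P}}$ has a cusp at one of them and neither has a preimage in $\R^2$). This is where Campbell's papers \cite{Andrew1, Andrew2, Andrew3, Andrew4, Andrew5} and Proposition~\ref{propostion_Asymptoticset_Pinchuk} are essential: one must show that the attachment of $V_1$ to $\Phi(\R^2)$ at infinity, along the three flower components $D_1, D_2, D_3$, neither creates an obstruction to homotoping loops into the regular part nor violates local triviality of the stratification.
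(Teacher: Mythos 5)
Your proposal follows essentially the same route as the paper: the same covering built from Campbell's four simply connected regions, the same use of Lemma \ref{propositionVF}, the reduction of $IH_1^{\bar 0}$ to the statement that allowable $1$-cycles (those avoiding $L=(-1,-163/4,0_{\R^4})$ and $O=(0,0,0_{\R^4})$, the two points of $S_{\mathcal P}$ without preimage) bound automatically-allowable $2$-chains, and the contractibility of the regular part. The only differences are minor: the paper uses the coarser filtration $V_{\mathcal P}\supset\{L,O\}$, observing that $V_{\mathcal P}$ is a topological manifold along the rest of $S_{\mathcal P}\times\{0_{\R^4}\}$ (exactly two sheets glued along each arc $C_i$), whereas you keep the whole curve as a $1$-dimensional stratum; and your explicit appeal to $\Phi(\R^2)\cong\R^2$ together with $H_1^{\rm BM}(\R^2)=0$ supplies the justification that the paper leaves to its figure when it asserts that the allowable chains of types $c_1$ and $c_2$ bound.
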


\begin{proof} 
By the construction of Valette varieties (Section \ref{ConstructionValette}) and Lemma \ref{LemmaVF} 
and Lemma \ref{Lemma-covering}, the Valetty varieties  $V_\mathcal{P}$ of the Pinchuk  map $\mathcal{P}$ associated the covering $\{U_i\}_{i=1,\ldots,4}$ chosen in Lemma  \ref{Lemma-covering} has four components of dimension 2: they are smooth, non-compact, connected and are glued along the asymptotic variety $S_{\mathcal{P}}$ as follows: 
$F(A_2)$ and $F(B_1)$ are glued along the curve $C_1$, 
$F(A_2)$ and $F(B_2)$ are glued along the curve $C_2$, 
and $F(A_1)$ and $F(B_1)$ are glued along the curve $C_3$. 
These Valette varieties are 2-dimensional varieties embedded in  $\mathbb{R}^6$  and 
 its planar presentations can be illustrated (topologically) as in Figure \ref{ValetteVarietyPinchuk}. 
 
 \begin{figure}[h!]  
\begin{center}

\begin{tikzpicture} 

\draw[dotted, thick] (3,0) parabola[bend pos=0.5] bend +(0,-1) +(-3,-2);

\draw[dotted, thick] (3,-2) parabola[bend pos=0.0] bend +(0,0) +(0,3);

\draw[dotted, thick] (6,2) parabola[bend pos=0.0] bend +(0,0) +(0,-4);

\draw[dotted, thick] (6,2) parabola[bend pos=0.0] bend +(0,0) +(-2,2);

\draw[dotted, thick] (6.2, 4) parabola[bend pos=0.] bend +(0,0) +(-0.2,-2);

\draw[dotted, thick]  (9, -2) -- (8,-2) parabola[bend pos=0.0] bend +(0,0) +(-2,4);

%$F(B_1)$
\fill[fill=blue!20]  (0,4) cos (3,0) parabola [bend pos=0.5] bend +(0,-1) +(-3,-2);

%(F(B_2))
\fill[fill=green!20]  (3,-2) parabola[bend pos=0.0] bend +(0,0) +(0,3)  sin (6,2) parabola[bend pos=0.0] bend +(0,0) +(0,-4) ;

%$F(A_2)$
\fill[fill=orange!20] (0,4) cos (3,0) sin (6,2) parabola[bend pos=0.0] bend +(0,0) +(-2,2);

%F(B_1)
\fill[fill=blue!20] (6.2,4) parabola[bend pos=0.] bend +(0,0) +(-0.2, -2) cos (9,-1) -- (9,4) -- (6.2, 4);

%F(A_1)
\fill[fill=purple!20] (8,-2) parabola[bend pos=0.0] bend +(0,0) +(-2,4) cos (9,-1) -- (9, -2);

\draw[very thick] (0,4) cos (3,0);
\node at (2.1, 2)[above] {$C_1$};
\node at (3, 0)  {$\bullet$};
\node at (3.1, 0.1) [above] {$L$};
\draw[very thick] (3, 0) sin (6,2);
\node at (4.8, 1) [above] {$C_2$};
\draw[very thick] (6, 2) cos (9,-1);
\node at (7.8, 0.6) [below] {$C_3$};

\node at (3.1, 0.1) [above] {$L$};

\node at (1.5, -1)  {$F(B_1)$};
\node at (4.5, -1)  {$F(B_2)$};
\node at (8, -1)  {$F(A_1)$};
\node at (3.5, 3.3)  {$F(A_2)$};
\node at (6.9, 3.3)  {$F(B_1)$};

\draw[->] (0.5, -0.2) -- (0, -0.2);
\draw[->] (0.5, 0.2) -- (0, 0.2);
\draw[->] (0.5, 0.6) -- (0, 0.6);
\draw[->] (0.5, 1) -- (0, 1);
\draw[->] (0.5, 1.4) -- (0, 1.4);

%\draw[->] (8, 2) -- (.5, 2);
\draw[->] (8, 2.4) -- (8.5, 2.4);
\draw[->] (8, 2.8) -- (8.5, 2.8);
\draw[->] (8, 3.2) -- (8.5, 3.2);
\draw[->] (8, 3.6) -- (8.5, 3.6);

\draw[red] (4,1) -- (2,4);
\draw[red] (4,1) -- (3.8, -2);
\node at (2.2,3)[right][red]{$\ell$};

\draw[very thick][red] (7,1.6) circle (0.3 cm); 
\node at (7.5,1.6)[red] {$c_2$};

\draw[very thick][red] (2.7, -0.5) -- (2.7, 0.6) -- (3.6, 0.6)  -- (3.1, -0.5);
\node at (3.1, 0.6) [above][red] {$c_1$};

\node at (5.9, 2) [above] {$O$};
\node at (6, 2)  {$\bullet$};
\node at (8.8, -0.5) [above] {$S_F$};

%falta $c_1$

%\draw (2.5, 0.2) sin (2.4,0) sin (2.7,0.3);

\end{tikzpicture}

\caption{The Valette variety associated to the Pinchuk map with the chosen co\-ver\-ing and some types of allowable chains for the intersection homology 
$IH_1^{\bar{0}}(V_{\mathcal{P}})$. The chains $c_1$ and $l$ are homologous for the intersection homology.} \label{ValetteVarietyPinchuk}
\end{center}
\end{figure}
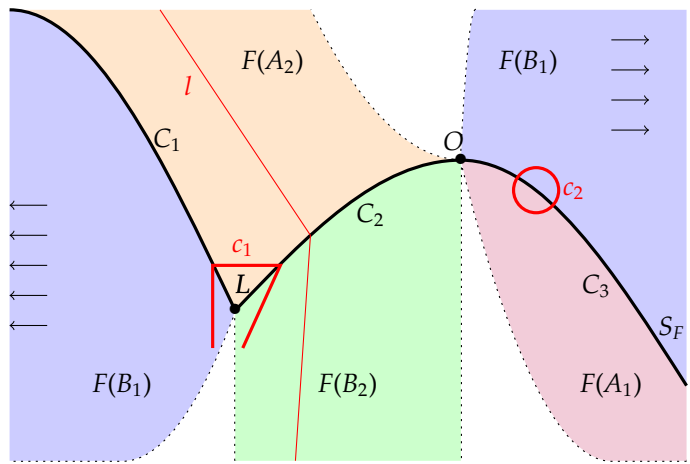
 
 Let us consider the following stratification of the varieties  $V_\mathcal{P}$: 
$$V_\mathcal{P}  \supset V_0 = \{  L, O \}  \supset \emptyset,$$
where $L= (-1, -163/4, 0_{\mathbb{R}^{4}})$ and $O=(0,0, 0_{\mathbb{R}^{4}})$. 
This stratification is locally topologically trivial stratification.  

Let us remark that since $\dim_\mathbb{R} V_\mathcal{P} = 2$, then we have only one perversity: the zero perversity $\bar{0}$ and $p_0 = p_1 = p_2 = 0$. 
We look for  the 1-dimensional allowable chains. For this, we have to verify the condition (\ref{conditionallowable}), {\it i.e.} 
for a 1-dimensional chain $ c$ be $(\bar{0},1)$-allowable, at first, $c$ must be satisfied the condition 
$$\dim (c \cap V_{2 - 2}) \leq 1 - 2 + 0.$$
Hence, if $c$ is $(\bar{0},1)$-allowable then $\dim (c \cap V_0) \leq -1$. 
Consequently, the chain $c $ cannot contain neither $L$ nor the origin $O$. 
In this case, the boun\-da\-ry $\partial c$ of $c$, which consists of  points or is an emptyset, does not meet the set $\{L, O\}$. Thus  $\partial c$ is also $(\bar{0},1)$-allowable, since $\dim  (\partial c \cap V_0) = \dim \emptyset = - \infty$. Then the 1-dimensional allowable  chains of $V_\mathcal{P} $ are the chains of types $c_1$ (with closed supports) and $c_2$ (with compact supports)  (see Figure \ref{ValetteVarietyPinchuk}). 
Notice that the chain $\ell$ as shown in Figure \ref{ValetteVarietyPinchuk} is the same type of  $c_1$, since $\ell$ and $c_1$ are homologous chains for the intersection homology with  respect to the considered stratification. 
Now assume that $\tau_i$ is a 2-dimensional chain such that the boundary of $\tau_i$ is $c_i$, for $i= 1, 2$, then the condition (\ref{conditionallowable}) is obviously true for $\tau_i$  since  
$$\dim (\tau_i \cap V_{2 - 2}) \leq {\rm dim} (V_0) = 0 = 2 - 2 +0.$$  
This shows that the two chains $\tau_1$ and $\tau_2$ are $(\bar{0},2)$-allowable.  
Then we have $IH_1^{\bar{0}, {\rm c}} (V_\mathcal{P})= IH_1^{\bar{0}, {\rm cl}} (V_\mathcal{P}) = 0$.
\end{proof}

\begin{remark}
{\rm Since the stratification used in the proof is a locally topolo\-gi\-cally  trivial stratification then the result of the  Theorem \ref{Inter_Hom_Pinchuk} is an invariant for any (another) locally topolo\-gi\-cally trivial stratification. 
}
\end{remark}

\begin{corollary}
The Valette varieties associated to the Pinchuk map $\mathcal{P}$ constructed in the proof of the Theorem \ref{Inter_Hom_Pinchuk} is a counter example for the ``real version'' of the Valette's Theorem \ref{valette4}. 
\end{corollary}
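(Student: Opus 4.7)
The plan is to spell out exactly what the ``real version'' of Theorem \ref{valette4} would assert for a map $F:\R^2\to\R^2$, and then to display the Pinchuk map $\mathcal{P}$ together with the Valette variety $V_{\mathcal{P}}$ constructed in the proof of Theorem \ref{Inter_Hom_Pinchuk} as an explicit pair violating one of the two implications in the claimed equivalence. No new geometric input is required: everything is already available in the excerpt, and the argument is a matter of assembling three facts.

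First, I would restate the real analogue of Theorem \ref{valette4} in the form formulated in the introduction: for a nowhere vanishing jacobian polynomial map $F:\R^2\to\R^2$, the conditions ``$S_F=\emptyset$'' and ``$IH_1^{\bar 0}(V_F)=0$'' are equivalent (the dimensional drop from $2$ to $1$ and the restriction to the zero perversity are the natural real counterparts, since $\dim_{\R}V_F=2$ in this situation and $\bar 0$ is the only admissible perversity). Second, I would verify that $\mathcal{P}$ meets the hypothesis of this statement: by its very construction, recalled in the introduction, one has $J(\mathcal{P})(x,y)=t^2+(t+f(13+15h))^2+f^2>0$ for every $(x,y)\in\R^2$, so $\mathcal{P}$ is indeed a nowhere vanishing jacobian polynomial map from $\R^2$ to $\R^2$.

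Third, I would exhibit the failure of the equivalence. On the one hand, Theorem \ref{Inter_Hom_Pinchuk} gives $IH_1^{\bar 0,\mathrm{c}}(V_{\mathcal{P}})=IH_1^{\bar 0,\mathrm{cl}}(V_{\mathcal{P}})=0$ for the Valette variety $V_{\mathcal{P}}$ produced there. On the other hand, Proposition \ref{propostion_Asymptoticset_Pinchuk}(1) describes $S_{\mathcal{P}}$ explicitly as the image of the polynomial parametrization $s\mapsto (p(s),q(s))$ and in particular shows $S_{\mathcal{P}}\neq\emptyset$; alternatively, since Pinchuk proved that $\mathcal{P}$ is not injective while $\det(J\mathcal{P})$ is everywhere positive, $\mathcal{P}$ cannot be a polynomial automorphism and hence, by the very reduction of the Keller problem recalled in the introduction, fails to be proper, so $S_{\mathcal{P}}\neq\emptyset$. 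Putting the two observations together, the pair $(\mathcal{P},V_{\mathcal{P}})$ realizes the implication ``$IH_1^{\bar 0}(V_F)=0\Rightarrow S_F=\emptyset$'' with a vanishing left-hand side and a non-vanishing right-hand side, which is precisely a counter-example to the real version of Theorem \ref{valette4}.

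The only genuinely delicate point in this proposal is the first one, namely to be clear that the ``real version'' being refuted is the literal translation of Theorem \ref{valette4} to $F:\R^2\to\R^2$ (dimension $1$ instead of $2$, zero perversity only), because the choice of statement is exactly what the corollary is targeting; the rest is a straight citation of Theorem \ref{Inter_Hom_Pinchuk} and of Proposition \ref{propostion_Asymptoticset_Pinchuk}, and no further computation is needed.
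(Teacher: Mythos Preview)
Your argument is correct and follows exactly the intended route: the paper states this corollary without a separate proof, treating it as an immediate consequence of Theorem \ref{Inter_Hom_Pinchuk} (vanishing $IH_1^{\bar 0}$) together with the non-emptiness of $S_{\mathcal P}$ from Proposition \ref{propostion_Asymptoticset_Pinchuk}(1) and the nowhere-vanishing Jacobian condition, which is precisely what you spell out. Your explicit identification of the ``real version'' being refuted (as formulated in the introduction) is a helpful addition and matches the paper's intent.
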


\bibliographystyle{plain}

\end{document}